\newtheorem{theorem}{Theorem}[section]
\newcommand{\seqnum}[1]{\href{https://oeis.org/#1}{\rm \underline{#1}}}
\newcommand{\boardman}[2]{\genfrac{<}{>}{0pt}{}{#1}{#2}}
\newtheorem{algorithm}[theorem]{Algorithm}
\newtheorem{definition}[theorem]{Definition}
\newtheorem{corollary}[theorem]{Corollary}
\newtheorem{lemma}[theorem]{Lemma}
\newtheorem{proposition}[theorem]{Proposition}
\newtheorem{remark}[theorem]{Remark}
\theoremstyle{definition}
\theoremstyle{theorem}
\tikzstyle{D0}=[xshift=0cm]
\tikzstyle{D1}=[xshift=2.8cm]
\tikzstyle{D2}=[xshift=5.6cm]
\tikzstyle{D3}=[xshift=8.4cm]
\tikzstyle{D4}=[xshift=11.2cm]
\tikzstyle{U}=[yshift=-1.5cm]
\tikzstyle{place}=[circle,draw=black]
\tikzstyle{post}=[->,shorten >=1pt,>=stealth,semithick]
\tikzstyle{pre}=[<-,shorten <=1pt,>=stealth,semithick]
\tikzset{>=latex}
\tikzset{
  mytree/.style = {
  	scale=1,>=stealth',
  	every node/.style = {draw,align=center},
  	level/.style={sibling distance = 2cm/2^#1, level distance = .5cm}
  	},
  testtree/.style = {
    level 1/.style = {red, level distance = 2.5cm, sibling angle = 90},
    level 2/.style = {blue, sibling angle = 60},
    level 3/.style = {teal},
    grow cyclic
  },
  newtree/.style = {
    level 1/.style = {sibling distance = 16pt, level distance = 7pt},
    level 2/.style = {sibling distance = 8pt},
    level 3/.style = {sibling distance = 4pt},
    level 4/.style = {sibling distance = 2pt},
    level 5/.style = {sibling distance = 1.5pt},
    level 6/.style = {sibling distance = 1pt},
  },
  treenode/.style = {align=center, inner sep=0pt, text centered, font=\sffamily},
  a_leaf/.style = { draw=gray, inner sep=.5pt},
  a_inner/.style = { circle, draw=black, inner sep=.5pt},
  arn_n/.style = {treenode, rectangle, draw=black, text width=2.3em, minimum size=2em},
  arn_rn/.style = {treenode,   circle,  draw=black, text width=1.5em,  thin},
  arn_rx/.style = {treenode, rectangle, draw=black, text width=1.5em, thin, minimum size= 1.5em},
  arn_r2/.style = {treenode, diamond,   draw=black, text width=1.5em, thin, minimum size= 1.5em},
  arn_x/.style = {treenode, rectangle, draw=black, text width=1.3em, minimum size=1.3em}
}
\newlength{\mydiameter}
\NewDocumentCommand{\mynode}{
O{}
m
m
m
O{}
}{%
{%
\node [#1] (#2)  at #3 {#4};

\pgfextracty{\mydiameter}{\pgfpointdiff{\pgfpointanchor{#2}{south}}{\pgfpointanchor{#2}{north}}}%

\draw  [#5]  (#2.north west) arc
    [
        start angle=90,
        end angle=270,
        radius={0.5\mydiameter}
    ] -- (#2.south east) arc
    [
        start angle=-90,
        end angle=90,
        radius={0.5\mydiameter}
    ] -- cycle;
}%
}
\newcommand*\circled[1]{\tikz[baseline=(char.base)]{            \node[circle,draw,inner sep=2pt] (char) {#1};}}
\newcommand*\ellipsed[1]{\tikz[baseline=(char.base)]{           \mynode[inner xsep=-2pt]{A}{(0,0)}{#1};}}
\begin{document}
\title{Two eggs any style \\ generalizing egg-drop experiments}
\author{Harold R. Parks}
\address {Department of Mathematics\\
 Oregon State University\\ 
 Corvallis\\ 
 Oregon 97331 USA}
\email{hal.parks@oregonstate.edu}
\author{Dean C. Wills}
\address{AppDynamics, San Francisco, California 94107 USA}
\email{dean@lifetime.oregonstate.edu}
\subjclass[2000]{05C05, 05A10}

\begin{abstract}
	The egg-drop experiment introduced by Konhauser, Velleman, and Wagon, later  generalized by Boardman, is further generalized to two additional types. 
The three separate types of egg-drop experiment under consideration are examined in the context of binary decision trees. It is shown that all three types of egg-drop experiment are binary decision problems that can be solved  efficiently using a non-redundant algorithm---a class of algorithms introduced here. 
The preceding theoretical results are applied to the three types of egg-drop experiment to compute, for each, the maximum height of a building that can be dealt with using a given number of egg-droppings.
\end{abstract}

\maketitle

\section{Introduction}
We generalize the famous egg-drop experiment introduced \mbox{in \cite{konhauser1996way}.}

\begin{quote}
``Suppose we wish to know which windows in a $36$-story building are safe to drop eggs from, 
and which will cause the eggs to break on landing. \ .\,.\,.\ \ Suppose two eggs are 
available. What is the least number of egg-droppings that is guaranteed to 
work in all cases?''
\end{quote}

This experiment was used as an exposition of dynamic programming techniques in \cite{Sniedovich}, examined as a form of weighted binary search in \cite{Wills:2009:CCP:1834851}, and assigned as a exercise in \cite{skiena2009algorithm}; it has also enjoyed wide circulation on the internet
as a possible interview question for programmers as either the egg or light bulb dropping problem \cite{brilliant}.  In the typical
internet version of the problem, the building has been increased 
in size to be 100-stories tall.

Since real eggs seldom survive even being dropped a few feet, some clarification
is called for: We agree that the two eggs are identically strong, each can be reused 
unless it breaks, an egg that breaks from one height will break from any greater height,
and an egg that survives from one height will survive from any lower height.

\medskip
Some years after the problem appeared in \cite{konhauser1996way}, Michael
E. Boardman \cite{BoardEgg} came up with his own solution and, at the 
urging of a colleague,
went on to use his method to solve the problem when the
egg-drop experiment starts with $k$-eggs.  So indeed we will be 
further generalizing beyond Boardman's generalization of the 
original egg-drop experiment.

Boardman's approach revolves around carefully planning ahead to 
consider what  will need to be done when an egg breaks. We might say this is the tactical approach. We want to be more strategic and optimize over the full range of possible outcomes.

Our approach to solving the problem is based on thinking about a 
binary
decision tree that encapsulates a strategy. If the egg breaks, then the tree branches to the left. If the egg survives the fall, then the tree branches to the right. The next node visited either determines where an egg should be dropped, or is a terminal node that reveals the strength of the egg.

Using the binary tree as our guide we are able to handle some interesting variations on the egg-drop experiment with relative ease.
The two variations that lead to the nicest results are the following:
\begin{itemize}
\item
{\bf Replacement Eggs.} The supply of eggs is restored to the original number $k$
whenever the egg that is dropped 
does not break.
\item
{\bf Bonus Eggs.}  A new egg  is received whenever the egg that is dropped 
does not break.
\end{itemize}
We will refer to Boardman's $k$-egg version of the original problem as
Standard Eggs:
\begin{itemize}
\item
{\bf Standard Eggs.}  No new eggs will be forthcoming; you break it, you lose it.
\end{itemize}

\begin{figure*}[htp]\centering    
\resizebox{4.25in}{!}{
\begin{tikzpicture}[->,>=stealth',level/.style={sibling distance = 16cm/2^#1, level distance = 1.5cm}] 

    \node [arn_rn] {4}       
        child{ node [arn_rn] {1}    
            child { node [arn_rx] {0}   
    	    }
            child{ node [arn_rn] {2}   
            	child{ node [arn_rx] {1}   
	         }
		child{ node [arn_rn] {3}  
	                  child{ node [arn_rx] {2} } 
	                  child{ node [arn_rx] {3} } 
	         }
            }
        }
        child{ 
        	node [arn_rn] {7}    
            child { node [arn_rn] {5}   
	              child{ node [arn_rx] {4} }   
		      child{ node [arn_rn] {6}   
		                child { node [arn_rx] {5} }  
		                child { node [arn_rx] {6} }  
		      }
    	    }
            child{ node [arn_rn] {9}  
            	  child{ node [arn_rn] {8}   
	                 child{ node [arn_rx] {7} } 
	                 child{ node [arn_rx] {8} }  
	           }
		  child{ node [arn_rn] {10}   
	                 child{ node [arn_rx] {9} } 
	                 child{ node [arn_rx] {10} }  
	           }
            }
        }           
;
\end{tikzpicture}
}

\medskip
\caption{Starting with Two Eggs. No Replacement Eggs. No Bonus Eggs.} \label{eight.two}
\end{figure*}

The tree in Figure~\ref{eight.two} is a binary decision tree solution for two Standard Eggs 
with a $10$-story building. Since $4$ is the number of the root node,  the first egg should be dropped from the 4th floor window. If the egg breaks, we follow the left branch to the node numbered $1$ and that tells us to drop the next egg from the 1st floor window. On the other hand, if the original egg does not break, we follow the right branch from the root to the node numbered $7$ and that tells us to drop the next egg from the $7$th floor window. To summarize:
A node with a number in a circle  
indicates the experiment to perform, i.e., the floor from which to drop the egg; a node with a number in 
a square (always a leaf)  gives the solution, i.e., the strength of the eggs. 

We can see from  the tree  in Figure~\ref{eight.two} 
that four egg drops will suffice for a $10$-story building. We can also see that three egg drops
can only be enough for a $7$-story building, because if we remove the nodes at depth $4$, then there only
$7$ leaves left to represent the strength of the eggs.

In the next section, we define the notions of a normal binary decision problem (Definition \ref{def.normal})
and a non-redundant algorithm for solving such problems (Definition \ref{def.non.redundant}).  We then
show  in Theorem \ref{the-tree-result}  that any non-redundant algorithm for 
solving a normal problem can be represented by a full binary search tree.
Of course, the point of that work is that it applies to the three egg-drop problems 
described above (and certainly to  many others not yet devised).
In the third section, we use the tree representation of algorithms
to show, for all three variations,  the maximum height of a building that can be dealt with
using a given number of egg-droppings. See Figure~\ref{all.table}. With that information in hand, 
one can answer the fundamental question of egg-drop experiments:
``What is the least number of egg-droppings that is guaranteed to 
work in all cases?'' 


\begin{figure*}[htp] 
\begin{center}
	
\begin{tabular}{| l | c | c | c | c | c | c | c | c | l |}
\cline{1-1}
{\bf Two Eggs}  \vrule height 2.25 ex width 0 pt depth 0 pt \\[0.25ex]
\hline
\it{Drops} & 1 & 2 & 3 & 4 & 5 & 6 & 7 & 8  & \it{OEIS}\vrule height 2.25 ex width 0 pt depth 0 pt \\[0.25ex]
\hline
Standard       & 1 & 3 & 6 & 10 & 15 & 21 & 28 & 36  & \seqnum{A000217}
\vrule height 2.25 ex width 0 pt depth 0 pt \\[0.25ex]
\hline
Replacement & 1 & 3 & 6 & 11 & 19 & 32 & 53 & 87  &\vrule height 2.25 ex width 0 pt depth 0 pt \\[0.25ex]
\hline
Bonus            & 1 & 3 & 6 & 12 & 22 & 42 & 77 & 147  &\vrule height 2.25 ex width 0 pt depth 0 pt \\[0.25ex]
\hline
\end{tabular}

\bigskip
\begin{tabular}{| l | c | c | c | c | c | c | c | c | l |}
\cline{1-1}
{\bf Three Eggs}  \vrule height 2.25 ex width 0 pt depth 0 pt \\[0.25ex]
\hline
\it{Drops} & 1 & 2 & 3 & 4 & 5 & 6 & 7 & 8  & \it{OEIS}\vrule height 2.25 ex width 0 pt depth 0 pt \\[0.25ex]
\hline
Standard       & 1 & 3 & 7 & 14 & 25 & 41 & 63 & 92   &  \seqnum{A004006} \vrule height 2.25 ex width 0 pt depth 0 pt \\[0.25ex]
\hline
Replacement & 1 & 3 & 7 & 14 & 27 & 51 & 95 & 176 & \vrule height 2.25 ex width 0 pt depth 0 pt \\[0.25ex]
\hline
Bonus            & 1 & 3 & 7 & 14 & 28 & 53 & 103 & 194  &\vrule height 2.25 ex width 0 pt depth 0 pt \\[0.25ex]
\hline
\end{tabular}

\bigskip
\begin{tabular}{| l | c | c | c | c | c | c | c | c | l |}
\cline{1-1}
{\bf Four Eggs}  \vrule height 2.25 ex width 0 pt depth 0 pt \\[0.25ex]
\hline
\it{Drops} & 1 & 2 & 3 & 4 & 5 & 6 & 7 & 8  & \it{OEIS}\vrule height 2.25 ex width 0 pt depth 0 pt \\[0.25ex]
\hline
Standard       & 1 & 3 & 7 & 15 & 30 & 56 & 98 & 162 & \seqnum{A055795}  \vrule height 2.25 ex width 0 pt depth 0 pt \\[0.25ex]
\hline
Replacement & 1 & 3 & 7 & 15 & 30 & 59 & 115 & 223 & \vrule height 2.25 ex width 0 pt depth 0 pt \\[0.25ex]
\hline
Bonus            & 1 & 3 & 7 & 15 & 30 & 60 & 116 & 228  &\vrule height 2.25 ex width 0 pt depth 0 pt \\[0.25ex]
\hline
\end{tabular}
\end{center}
\caption{Maximum height of a building that can be dealt with for small values.}\label{all.table}
\end{figure*}

\section{Theory}

In this section, we consider binary decision problems on integers, i.e., problems where we have the ability to make a binary decision at each stage of execution of an algorithm based on an integral input. We call these decisions {\bf experiments}. Of course, we will later apply this theory to egg-drop problems.
For such problems, it is appropriate to add the following qualifications which simplify and focus 
the discussion.

\begin{definition}\label{def.normal}
	We will call a binary decision problem \textbf{normal} if it meets these criteria:
	\begin{enumerate}
	\item (\textbf{complete}) The solution set is the integers between $0$ and some finite $N$ inclusive. Every solution is possible and constitutes an instance of the problem.
	\item (\textbf{partition}) If an experiment is performed at an integer $x$, success indicates the solution is within the set $[x,N]$, and failure indicates that the solution is within the set $[0,x-1]$. 	
	\end{enumerate}

\end{definition}

\begin{definition} \label{solution}
	We say that $x^{*}$ is the solution of a normal binary decision problem if both of the following hold:
\begin{enumerate}
	\item $x^{*}=0$, or experiment $x^{*}$ is a success, 
\item[] \hskip -2em  and
	\item $x^{*}=N$, or experiment $x^{*}+1$ is a failure.
	\end{enumerate}
\end{definition}

\begin{definition}
We say that an algorithm \textbf{solves a problem} if the algorithm is finite and arrives at a solution for every instance of the problem.
\end{definition}

\begin{definition}\label{def.non.redundant}
	We will call a binary decision algorithm \textbf{non-redundant} if no experiment is performed that is guaranteed to either succeed or fail.
\end{definition}

Deterministically choosing experiments given the results of previous experiments allows us to form a 
binary decision tree\footnote{We follow the convention that the smallest binary tree consists of a single root node.}
 over all possible experiments. In this way, we are performing a search of an ordered table of experiments from \mbox{$1$ to $N$}, trying to determine where the solution is in \mbox{$0$ to $N$}. 

We can use the notation \circled{$x$} to indicate that an experiment 
is to be performed at $x$, and the notation \fbox{$x$} to indicate that the conclusion of our sequence of experiments is the solution $x$. 
We can define an order on 
$$
\Big\{ \, \circled{$x$}: 1\leq x\leq N\Big\} \cup \Big\{ \,\fbox{$x$} : 0\leq x\leq N\Big\} 
$$
that 
conforms to the constraints of Definition \ref{solution} first by the value of the number then with 
\mbox{\circled{$x$} $<$ \fbox{$x$}}. 

We then find ourselves searching for the solution in an ordered table: 
\begin{center}
  \fbox{$0$}  , \circled{$1$}  , \fbox{$1$}  , \circled{$2$}, \fbox{$2$} , $\cdots $ , \fbox{$N-1$}  , \circled{$N$}, \fbox{$N$}.
\end{center}
This was studied by Knuth in \S 6.2.1 of \cite{knuth1998art3}, where he stated
\begin{displayquote}
``$\dots$\ \textit{any} algorithm for searching an ordered table of length $N$ by means of comparisons can be represented as an $N$-node binary tree in which the nodes are labeled with the numbers \mbox{$1$ to $N$} (unless the algorithm makes redundant comparisons).''
\end{displayquote}

Knuth goes on to present (what we might call) hybrid trees, with internal nodes representing comparisons in circles, and external nodes representing conclusions in boxes, as we have defined 
above.
Knuth then asserts, 
\begin{displayquote}
``Conversely, any binary tree corresponds to a valid method for searching an ordered table; we simply label the nodes 

\medskip
\centerline{
\begin{tabular}{lllllllll}
  \fbox{$0$}  & \circled{$1$}  & \fbox{$1$}  & \circled{$2$}& \fbox{$2$} & $\cdots $ & \fbox{$N-1$}  & \circled{$N$}& \fbox{$N$}
 \end{tabular}}

\medskip
\noindent
in symmetric order, from left to right."\footnote{Symmetric order is also called inorder.}
\end{displayquote}

A summary of what Knuth is telling us to do is the following: 
Create an arbitrary binary tree with $N$ nodes,
label the nodes inorder with circled numbers, then add leaf nodes until each of the original nodes has two children, 
finally number these leaf nodes inorder with boxed numbers. 
The result will be a full binary tree that is labeled as described above by Knuth.


There is still a bit more work to do to
prove that following the sequence of experiments dictated by the labels on the inner nodes
does in fact lead to the leaf labeled with the solution.


\begin{lemma} \label{lemma.successor}
	The inorder successor of an inner node in a full binary tree is the first element of the inorder traversal of the right child subtree. Likewise, the predecessor of an inner node in a full binary tree is the last element of the inorder traversal of the left child subtree.
\end{lemma}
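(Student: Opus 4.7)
The plan is to unfold the recursive definition of inorder traversal and show that the assertion is essentially immediate, with fullness playing only a single focused role. Recall that the inorder traversal of a binary tree rooted at $u$ visits, in order, the inorder traversal of $u$'s left subtree, then $u$ itself, then the inorder traversal of $u$'s right subtree, with the convention that the inorder traversal of the empty tree is empty. The fullness hypothesis enters at exactly one point: it guarantees that the right subtree $R_v$ of an inner node $v$ is nonempty, so that ``first element of the inorder traversal of $R_v$'' is well-defined. Without fullness, an inner node could have only a left child, and its inorder successor would have to be sought outside the subtree rooted at $v$.

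First I would establish the local version of the claim: within the inorder traversal of the subtree rooted at $v$, the node immediately after $v$ is the first node of the inorder traversal of $R_v$. This is a direct reading of the recursive definition, since that traversal is literally the concatenation of the traversal of $L_v$, then $v$, then the traversal of $R_v$, and the last piece is nonempty by fullness.

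Next I would promote this local statement to a statement about the inorder traversal of whatever ambient full binary tree $T$ contains $v$. The key auxiliary observation, provable by a straightforward induction on the length of the path from the root of $T$ to $v$, is that the inorder traversal of any subtree of $T$ appears as a contiguous block within the inorder traversal of $T$ itself. Consequently, the node following $v$ in the overall inorder traversal of $T$ coincides with the node following $v$ in the inorder traversal of the subtree rooted at $v$, and the local step identifies that node as the first element of the traversal of $R_v$. The predecessor statement is proved by the mirror-image argument, interchanging the roles of left and right subtrees.

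The main obstacle, such as it is, is purely bookkeeping: one must be careful to distinguish ``inorder successor in $T$'' from ``the element following $v$ in the inorder traversal of the subtree at $v$'' and verify that the two coincide. Once that contiguity observation is in hand, the recursive definition of inorder together with the fullness hypothesis does all of the real work.
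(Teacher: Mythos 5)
Your proposal is correct and follows essentially the same route as the paper's proof, which simply notes that fullness guarantees a nonempty right (resp.\ left) subtree and that the claim then follows from the definition of inorder. You have merely filled in the details the paper leaves implicit, in particular the contiguity of a subtree's inorder traversal within the ambient traversal.
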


\begin{proof}
	Since the tree is full, an inner node has both left and right child sub-trees, which are non-empty by definition. The result then follows by the definition of inorder.
	\end{proof}

\begin{corollary}\label{corollary.ancestor}
	The inorder successor of a leaf node in a full binary tree is either nil or an ancestor, whose left child subtree contains the leaf. 
	Likewise, the inorder predecessor of a leaf node in a full binary tree is either nil or an ancestor, whose right child subtree contains the leaf.
\end{corollary}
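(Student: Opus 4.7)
The plan is to prove only the successor claim; the predecessor claim will follow by the left-right symmetry of the definition of inorder traversal. Fix a leaf $\ell$ and walk up its ancestor chain $\ell = v_0, v_1, v_2, \ldots, v_r$, where $v_r$ is the root. The key observation I will use throughout is that since the tree is full, each internal $v_j$ ($j \geq 1$) has both a left and a right child, so $v_{j-1}$ is unambiguously either the left or the right child of $v_j$.

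I will split into two cases based on this chain. In the first case, suppose $v_{j-1}$ is the right child of $v_j$ for every $j = 1, 2, \ldots, r$. Then $\ell$ is obtained from the root by repeatedly descending to the right child, so $\ell$ is the rightmost node of the tree. By the definition of inorder, the rightmost node is the last element visited, so its successor is nil, matching the claim. In the second case, let $k \geq 0$ be the least index such that $v_k$ is the \emph{left} child of $v_{k+1}$. I claim that $v_{k+1}$ is the inorder successor of $\ell$, and of course by construction $\ell$ lies in the left subtree of $v_{k+1}$, giving exactly the conclusion of the corollary.

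To verify the claim in the second case, I will use Lemma \ref{lemma.successor} together with a small internal observation: within the subtree rooted at $v_k$, the leaf $\ell$ is the rightmost descendant. This holds because, by the minimality of $k$, for each $0 \leq j < k$ the node $v_j$ is the right child of $v_{j+1}$, so $\ell$ is obtained from $v_k$ by a sequence of right-child descents. Consequently, in the inorder traversal of the subtree rooted at $v_k$, the leaf $\ell$ is the last element visited. Since this subtree is exactly the left subtree of $v_{k+1}$, Lemma \ref{lemma.successor} (or rather its mirror image, which follows by symmetry from the lemma or can be read off directly from the definition of inorder) says $v_{k+1}$ is visited immediately after the last element of its left subtree, i.e., immediately after $\ell$. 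Hence $v_{k+1}$ is the inorder successor of $\ell$.

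I do not anticipate a serious obstacle: the fullness hypothesis makes the ancestor walk well-defined, and the only nontrivial ingredient is the auxiliary fact that $\ell$ is the rightmost descendant of $v_k$, which is a short induction on $k$. The predecessor claim is then immediate by swapping ``left'' and ``right'' in the argument above.
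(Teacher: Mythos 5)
Your argument is correct and follows the same route as the paper, which simply derives the corollary from Lemma~\ref{lemma.successor}; you have just written out explicitly the ancestor-chain and rightmost-descendant details that the paper leaves implicit. No gaps.
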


\begin{proof}
	Follows from Lemma \ref{lemma.successor}.
\end{proof}

\begin{theorem}\label{the-tree-result}
Any non-redundant   algorithm that solves a normal problem over $N$ experiments 
can be  represented by a full binary search tree as above.
\end{theorem}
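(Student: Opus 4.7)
The plan is to construct, from any non-redundant algorithm $\mathcal A$ that solves a normal problem, its \emph{execution tree} $T_{\mathcal A}$, and then verify that $T_{\mathcal A}$ has precisely the structure Knuth describes. I define $T_{\mathcal A}$ by tracing $\mathcal A$: the root is the initial state; at every state in which $\mathcal A$ performs an experiment at $x$, I create an internal node labeled \circled{$x$} whose left (resp.\ right) child is the state $\mathcal A$ enters after failure (resp.\ success); at every state in which $\mathcal A$ halts with output $x$, I create a leaf labeled \fbox{$x$}. Finiteness of $\mathcal A$ makes $T_{\mathcal A}$ finite.

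The central step is an interval invariant. To each node $v$ of $T_{\mathcal A}$, attach the set $S(v)\subseteq\{0,1,\dots,N\}$ of solutions consistent with the responses accumulated along the path from the root to $v$. I claim, by induction on depth, that $S(v)$ is always an integer interval $[a,b]$. The base case $S(\text{root})=[0,N]$ is completeness. For the inductive step, the partition condition of Definition~\ref{def.normal} makes the failure and success children of an experiment at $x$ inherit the sets $[a,x-1]$ and $[x,b]$ respectively; non-redundancy (Definition~\ref{def.non.redundant}) rules out $x\le a$ and $x>b$, leaving $a+1\le x\le b$, so both children receive nonempty intervals. Thus every internal node has two children, so $T_{\mathcal A}$ is full. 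Furthermore, since no non-redundant experiment exists when $|S(v)|=1$ and a correct algorithm must output the unique consistent solution there, the leaves correspond exactly to the singleton intervals, with the leaf's label equal to its singleton value.

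A second induction on subtree height, using the same invariant, gives the full labeling: the subtree rooted at a node with $S(v)=[a,b]$ has exactly $b-a+1$ leaves labeled $a,a+1,\dots,b$ and exactly $b-a$ internal nodes labeled $a+1,a+2,\dots,b$, each value occurring exactly once. Applied at the root, this yields the full complement of $N+1$ leaves and $N$ internal nodes with Knuth's labeling, and it simultaneously shows that at each node \circled{$x$} all labels in its left subtree strictly precede \circled{$x$} in Knuth's joint order while all labels in its right subtree strictly follow it; this is precisely the binary-search-tree property. A final invocation of Lemma~\ref{lemma.successor} (or the same induction) identifies the inorder traversal of $T_{\mathcal A}$ with the canonical sequence, finishing the proof. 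I anticipate the main obstacle to be pinning down the notion of algorithmic state precisely enough to justify the execution-tree construction, and then verifying the interval invariant at each step; once that invariant is in place, fullness, the counting, and the BST labeling all fall out together.
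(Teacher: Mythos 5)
Your proof is correct, but it takes a genuinely different route from the paper's. The paper outsources the representation claim itself to Knuth's assertion (quoted just before the theorem) that any non-redundant table-searching algorithm is an $N$-node binary tree labeled in symmetric order; its proof of Theorem~\ref{the-tree-result} is then devoted entirely to the complementary verification that \emph{executing} such a labeled tree terminates at a leaf \fbox{$x$} whose label satisfies Definition~\ref{solution}, which it does via Lemma~\ref{lemma.successor} and Corollary~\ref{corollary.ancestor}: the inorder predecessor \circled{$x$} and successor \circled{$x{+}1$} of the leaf are ancestors branching right and left respectively, so experiment $x$ succeeded and experiment $x+1$ failed. You instead build the execution tree directly from the algorithm and prove the representation from scratch: the interval invariant $S(v)=[a,b]$, with non-redundancy pinning the experiment label to $a+1\le x\le b$ so that both children are reachable (fullness), correctness and non-redundancy forcing halting exactly at singleton intervals, and the counting induction delivering Knuth's inorder labeling. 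What your approach buys is a self-contained proof of exactly what the theorem asserts, with no appeal to Knuth; what the paper's approach buys is the explicit check that the leaf reached is the solution in the sense of Definition~\ref{solution}, which in your argument is present only implicitly (the leaf's singleton $S(v)$ is by construction the unique value consistent with all observed outcomes, which is precisely Definition~\ref{solution}). One small point worth making explicit in your write-up: to conclude that leaves occur \emph{only} at singleton intervals you need that the algorithm cannot halt while $|S(v)|>1$, which follows because it would then answer incorrectly on some instance; you use this in the counting induction but state only the converse direction.
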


\begin{proof}
Utilization of the binary decision tree will always terminate at a leaf, labeled as a \fbox{$x$} for some $x$. 
It remains to be shown that that $x$ satisfies the conditions of  Definition \ref{solution}. 

If $x>0$, then it has a inorder predecessor of \circled{$x$}, which by Corollary \ref{corollary.ancestor} is an ancestor whose right child subtree contains \fbox{$x$}. This means that in execution of the binary decision tree, an experiment was performed at $x$ and succeeded. 

If $x<N$, then it has a inorder successor of \ellipsed{$x\mathsmaller{+1}$}, which by Corollary \ref{corollary.ancestor} is an ancestor whose left child subtree contains \fbox{$x$}. This means that in execution of the binary decision tree, an experiment was performed at $x+1$ and failed.
\end{proof}

\subsection{Normality and Non-Redundancy}

We would like to apply the theory developed above to egg-drop problems.
Clearly, we have the next result.

\begin{proposition}\label{prop.normal}
All three egg-drop problem variations, Replacement, Bonus, and Standard, are normal.	
In addition, any variation that loses a constant number of eggs on failure and gains a 
constant number of eggs on success is also normal.
\end{proposition}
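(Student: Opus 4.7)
The plan is to verify the two conditions of Definition \ref{def.normal} directly, and observe that these conditions depend only on the egg-drop physics, not on how the supply of eggs is updated. Thus the bookkeeping differences between Standard, Replacement, Bonus, and any other constant-loss/constant-gain variation are irrelevant to normality.

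First I would fix notation: for a building with $N$ stories the unknown quantity being determined is the egg strength $x^{*}$, defined as the greatest floor from which an egg can be dropped without breaking (with $x^{*}=0$ meaning the egg breaks from the first floor, and $x^{*}=N$ meaning the egg survives every drop in the building). The complete condition is then immediate: the set of possible egg strengths is exactly $\{0,1,\dots,N\}$, and each value in this set is realized by some physically possible egg, so every such integer is an instance of the problem. This part of the argument never uses anything about how many eggs are on hand; it is a statement about the hypotheses we already imposed in the introduction (identically strong eggs, monotonic break/survive behavior).

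Next I would verify the partition condition. Suppose an experiment is performed at an integer $x \in \{1,\dots,N\}$, that is, an egg is dropped from the $x$th-floor window. By the monotonicity assumption, if the egg survives then $x^{*}\geq x$, so the solution lies in $[x,N]$; if the egg breaks then $x^{*}\leq x-1$, so the solution lies in $[0,x-1]$. This is precisely the partition required by Definition \ref{def.normal}, and once again the argument is insensitive to whether eggs are replenished, multiplied, or merely depleted.

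Finally, I would note that neither of the two verifications above ever referred to the number of eggs currently available; the normality of the problem is a property of its solution set and its experiment semantics, while the egg-counting rule governs only which strategies are legal (and thus how large $N$ can be for a given number of drops). In particular, any variation that loses a constant number of eggs on failure and gains a constant number of eggs on success has the same solution set and same partition behavior as above, so the same two paragraphs apply verbatim. I do not anticipate a substantive obstacle here; the only care needed is to state clearly that ``problem instance'' means the choice of egg strength $x^{*}$, distinguishing the problem from the algorithm that solves it.
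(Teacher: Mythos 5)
Your proof is correct and matches the paper's intent: the paper simply asserts this proposition with ``Clearly, we have the next result'' and gives no proof, and your direct verification of the \emph{complete} and \emph{partition} conditions---together with the observation that neither depends on the egg-counting rule---is exactly the argument being left implicit.
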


If we want to  minimize the number of consecutive experiments required, 
then it {\it seems} evident that we should restrict our attention to non-redundant algorithms.
Proving that that restriction can be made requires carefully
analyzing an arbitrary solution algorithm.
For that analysis, we will assume the algorithm is  represented  by a tree,
with the experiments to be preformed in (metaphorical) circles and the solutions
in (metaphorical) squares. Failure of an experiment branches to the left in the tree
and success branches to the right. 
The tree is to represent what the algorithm 
actually does,  so it is 
required that for every node there is at least one  solution that would cause
the algorithm to reach that node: 
Any inaccessible nodes and their descendants
are to be removed.

\begin{definition}\label{solution.range}
Consider a normal problem and a binary tree representing  a solution algorithm
for the problem. 
For every inner node in the tree we associate with it 
the closed interval $[y,z]$ where $y$ and $z$ are the smallest and 
largest labels of leaves that are accessible  descendants of the node.
We will call that interval the \textbf{solution range} of the node. 
\end{definition}

Observe the following facts:
\begin{enumerate}
\item
If the node is the root node, then the solution range is necessarily $[0,N]$, where $N$ is the 
largest experiment. 
\item
If the solution range of a node is $[y,z]$, then $y$ is either $0$ or 
the largest number that is known to succeed
based on the outcomes of previous experiments.
\item
If  the solution range of a node is $[y,z]$, then  $z$ is either $N$ or one less 
than the smallest number known to fail
based on the outcomes of previous experiments.
\end{enumerate}

\begin{proposition}\label{normal-non} 
Any algorithm for solving any of the egg-drop  problem variations  can be modified 
so as to be non-redundant and to never require more experiments than the original algorithm. 
\end{proposition}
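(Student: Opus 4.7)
The plan is to perform systematic local surgery on the tree representing the algorithm: identify each redundant inner node and splice it out by promoting its single remaining child subtree.

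First I would translate the notion of redundancy into the geometry of the solution ranges introduced in Definition~\ref{solution.range}. At an inner node $v$ with solution range $[y,z]$ performing experiment at $x$, failure of $x$ is possible iff some solution in $[y,z]$ lies below $x$, i.e., iff $y < x$, and success iff $x \leq z$. Thus $v$ is redundant precisely when $x \leq y$ or $x > z$, and in either case one of $v$'s two subtrees contains no accessible leaf and, by the pruning convention stated before the proposition, has already been removed. Redundancy of $v$ is therefore equivalent to $v$ having only one non-empty child subtree $T$.

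Next I would perform the contraction: whenever such a $v$ occurs, replace $v$ by $T$ in its parent's slot (promoting $T$ to root if $v$ is the root). This deletes one edge from every root-to-leaf path formerly passing through $v$ and leaves all other paths untouched, so the worst-case number of experiments in the modified tree is no larger than in the original. Iterating finitely many times eliminates every redundant node and yields a tree in which every inner node has two non-empty children, i.e., a non-redundant algorithm, whose leaf set still coincides with that of the original and so still solves the same problem correctly.

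The main obstacle is verifying that each contraction yields an executable egg-drop strategy. This is immediate for the Standard variant, where a successful drop neither consumes nor generates eggs, so the supply entering $T$ is unchanged. The Bonus and Replacement variants require more care, because skipping the guaranteed success at $v$ forfeits the egg gain (or the refresh to $k$) that the original algorithm was passing into $T$. I would dispose of this by a path-by-path bookkeeping in $T$: since the excised experiment at $v$ was guaranteed to succeed, it contributed no failure anywhere in the original tree, so the worst-case count of failures along any surviving root-to-leaf path of $T$ was already bounded by the reduced egg stock available after contraction; a short induction on the depth of $T$, using the uniform gain/loss structure provided by Proposition~\ref{prop.normal}, then confirms that every path of $T$ remains executable with its new initial egg supply, completing the proof.
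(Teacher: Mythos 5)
Your reduction of redundancy to ``one non-empty child subtree'' and your treatment of guaranteed failure are fine: omitting a drop that is certain to break an egg only leaves you with at least as many eggs at every subsequent node, so the spliced subtree stays executable. The gap is in the guaranteed-success case for the Bonus and Replacement variants, which is exactly where the real content of the proposition lies. Your claim that ``the worst-case count of failures along any surviving root-to-leaf path of $T$ was already bounded by the reduced egg stock available after contraction'' is false: the original algorithm is permitted to make a guaranteed-success drop precisely \emph{in order to} farm an extra egg (or refresh to $k$), and the subtree $T$ may then legitimately spend that extra egg. For instance, with Bonus Eggs and one egg in hand at a node with solution range $[1,3]$, the algorithm may drop from floor $1$ (guaranteed to survive, yielding a second egg) and then drop from floors $3$ and $2$ in succession, a path with two failures. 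After your contraction you enter $T$ with a single egg and the path becomes inexecutable. No induction on the depth of $T$ can rescue this, because the premise it would need --- that $T$ never uses the forfeited egg --- simply does not hold for an arbitrary algorithm.

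The paper's proof sidesteps this by \emph{replacing} the experiment rather than deleting it: a guaranteed-success drop from $x\leq y$ is changed to a drop from $y+1$. On the surviving branch the bonus or replacement egg is still acquired, so the original right subtree remains executable unchanged; on the new breaking branch the solution is immediately known to be $y$, so no further eggs are needed, and a single new leaf is attached. (Symmetrically, a guaranteed failure from $x>z$ is replaced by a drop from $z$ with a new right leaf.) This keeps the height from increasing, keeps every surviving path executable with the eggs actually in hand, and produces a full binary tree directly. To repair your argument you would have to abandon pure contraction in the guaranteed-success case and adopt some such replacement step.
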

\begin{proof}
We will show that any algorithm that requires 
an experiment for which the outcome is guaranteed can be modified
so that that experiment is either omitted altogether or is
replaced by another experiment that does not have a 
guaranteed outcome. In any case, no more  experiments will be required
than were required by the original algorithm. Further modifications can be made 
until all experiments with guaranteed outcomes are eliminated.
This process is explained in detail below:

In the tree representation,
an experiment for which the outcome is guaranteed
will correspond to an inner node with only one child---recall
inaccessible nodes have all been removed. We consider such an inner 
node of least depth, say  $d$. After all nodes at depth $d$ have
been processed, we progress to nodes of greater depth.
When modifications are made to the algorithm, new inaccessible nodes
might be created, so when dealing with a particular node, removing
inaccessible descendants  is the first thing to do.

\medskip
For a node with a guaranteed outcome, let $x$ be the floor from which the
egg is to be dropped
and let $[y,z]$ be the solution range associated with that node. 
Since the situations are different, we will need to
consider separately the case when the guaranteed outcome is failure
and when the guaranteed outcome is success.

If $y=z$, then the solution is already known to be $y$. Since $y$ is known 
to be the solution, the node we are considering should instead be a leaf with $y$ inside 
the square to indicate that $y$ is the solution.
The entire left or  right child subtree below 
the node should be eliminated. The node is now a leaf, and the height of the
tree has not been increased.

Assume now that $y<z$, so  the solution $s$ could be any integer in $[y,z]$, 
and the algorithm cannot yet terminate.

\medskip
\noindent
\textbf{Guaranteed failure.}  Since dropping on $x$ will give no new information
and will lose an egg, this experiment could simply be omitted. Instead,
let us note that for failure to be guaranteed, it must hold that $x>z$.
By dropping the egg on $z$ instead of $x$, we might have a success and that
would tells us that the solution is $z$. If the egg breaks, then we are no
worse off than before. We have gained the information that the egg breaks on $z$,
but we can ignore the additional information for now and follow the
original algorithm.
For the tree, the label of the  node should be changed to $z$,
a right-child leaf labeled $z$ should be added, and the left-child 
subtree is unchanged. The node now has two children, and 
the height of the tree has not been increased. 

\medskip
\noindent
\textbf{Guaranteed success.}
Because success is guaranteed, it must hold that $x\leq y$.
The drop on $x$ is guaranteed to result in the egg surviving, 
so it may be  beneficial  in that  replacement or bonus eggs will be acquired. But since
no new information is gained, 
at least one additional experiment beyond the drop on $x$ will be required.  

Consider what happens if  the drop is made from $y+1$ instead of from $x$.
If the egg breaks, then the solution is known to be $y$. No additional 
experiments are required and the modified algorithm can terminate.
If the egg survives, then the benefit of acquiring replacement or bonus 
eggs is achieved as would have happened with a drop on $x$. 
We have gained the additional information that the egg survives when 
dropped on $y+1$, but we may ignore that information for now and 
simply follow the original algorithm.
For the tree, the label of the node should be changed to $y+1$,
a left-child leaf labeled $y$ should be added, and the right-child
subtree is unchanged. The node now has two children, and 
the height of the tree has not been increased. 

\medskip
The information that the egg broke when dropped on $z$
or  that the egg  survived when dropped on $y+1$ was 
temporarily ignored when we chose to leave the  subtree unchanged. Nonetheless
the information will affect the solution range
of descendant nodes.
We might well find newly inaccessible nodes and   new instances of guaranteed outcome
experiments in the   subtree.  Such issues occur at depth greater than $d$
and are to be dealt with after all depth $d$ nodes 
have been processed. When all nodes at depth $d$ have been processed, they are
all either leaves or inner nodes with two children. 
Then when all nodes in the tree have
been processed the tree will be a full binary tree with height no greater than the 
original tree.
\end{proof}

This process of modifying the tree described in the preceding proof 
is also codified in the following algorithm.

\begin{algorithm} \textit{Non-Redundancy}. We assume branch left on failure, $N>0$ and $N+1$ integral solutions in $[0,N]$. For each node, let $\mathcal{N}$ be its solution range $[y,z]$ and label $x$. For nodes other than the root, let $\mathcal{P}$ be the parent of $\mathcal{N}$, so that \textbf{reference}$(\mathcal{P})=\mathcal{N}$. Note that \textbf{reference} resolves to either $\textbf{left}$ or $\textbf{right}$. Let $\mathcal{L}(x)$ be the leaf containing $x$.

\begin{algorithmic}[1]
\For{each Node $\mathcal{N}$, Breadth first,}
\State Unlink all Nodes from $\mathcal{N}$, and its children, that are impossible to visit. \label{alg:inaccessible}
\If{$\mathcal{N}$ is not a leaf}
\If{$y=z$} \label{alg:condition}
	\State unlink $\mathcal{L}(z)$ and set \textbf{reference}$(\mathcal{P})=\mathcal{L}(z)$.\label{alg:change_z0}
\ElsIf{$x>z$} \label{alg:failure}
	\State set \textbf{label}$(\mathcal{N})=z$, unlink $\mathcal{L}(z)$ and set \textbf{right}$(\mathcal{N})=\mathcal{L}(z)$.  \label{alg:change_z}
\ElsIf{$x \leq y$}\label{alg:success}
	\State set \textbf{label}$(\mathcal{N})=y+1$, unlink $\mathcal{L}(y)$ and set \textbf{left}$(\mathcal{N})=\mathcal{L}(y)$. \label{alg:exp_y}
\EndIf
\EndIf
\EndFor
\end{algorithmic}
\end{algorithm}

\begin{proof} 
Line~\ref{alg:inaccessible} implements the removal of inaccessible descendants.  The condition in line~\ref{alg:condition}
is true when the node should be a leaf, and line~\ref{alg:change_z0} converts the node to an appropriately labeled leaf.
The condition in line~\ref{alg:failure} is true when the outcome is guaranteed to be failure, and line~\ref{alg:change_z} changes the 
experiment to $z$ and adds the new leaf that reports the solution is $z$ if the drop on $z$ is a success.
The condition in line~\ref{alg:success} is true when the outcome guaranteed to be success, and line~\ref{alg:exp_y} changes
the experiment to $y+1$ and adds the new leaf that reports the solution is $y$ if the drop
on $y+1$ is a failure. 

When all nodes are processed, and they will be, then each will  either be converted to a leaf 
or will have two children.
\end{proof}

\section{Egg Drop Numbers}\label{section.edn}
Finding solutions for the egg-drop problems with the minimum number of drops turns out to be straightforward. 
Proposition \ref{normal-non} and Theorem \ref{the-tree-result}
tells us that the algorithm we seek can be represented by a full binary search tree. 
Keeping track of eggs in hand, we recursively add nodes to the tree 
breadth first until there are sufficient inner nodes, 
representing  floors, while not violating the 
constraints of the problem.
For instance, for Standard Eggs starting with $k$ eggs, this 
means no path from the root can branch left more than $k$ times.
This process is carried out in Figure  \ref{BKVW.eggs} starting with $2$ eggs with the
number of eggs remaining  shown at each node.
We then determine the maximum number of inner nodes for depth $d$, call it $H_{{\mathcal P},\,k}(d)$,
where the subscript $\mathcal P$ indicates the problem we are trying to solve, and the subscript $k$
indicates starting with $k$ eggs; we use $\mathcal S$ for Standard eggs.
Then $H_{{\mathcal S},\,k}(d)$ is the highest floor that can be distinguished starting with $k$ eggs; Boardman calls these the ``egg-drop numbers" in \cite{BoardEgg}.
Counting the inner nodes in  Figure  \ref{BKVW.eggs}, we see that $H_{{\mathcal S},\,2}(4)=10.$ 
If it is impossible for eggs to be exhausted after $d$ drops, then the maximum is attained and $H_{*,*}(d)=2^{d-1}$.\footnote{In terms of egg-drop problems, this fact is equivalent to the unsurprising statement that if you have $d$ eggs available, then you can determine their strength for a building up to $2^d-1$ stories tall.}

We note that the type of problem constrains the possible topology of the tree, i.e., since one can not drop
an egg once one has exhausted the supply of eggs, nodes with no eggs are necessarily leaves. 
\begin{enumerate}
	\item The left child always has one fewer egg than the parent.
	\item The right child has:
	\begin{enumerate}
	\item Standard  Eggs: the same number of eggs as the parent, as in Figure~\ref{BKVW.eggs};
	\item Replacement Eggs: the same number of eggs as the root node, as in Figure~\ref{replacement.eggs};
	\item Bonus Eggs: one more egg than the parent, as in Figure~\ref{bonus.eggs}.
	\end{enumerate}
\end{enumerate}

Applying the numbering scheme $\{0, 1, 1, \cdots x, x \cdots 10, 10\}$ to 
Figure~\ref{BKVW.eggs} with an inorder traversal and using circles for inner nodes, squares for leaves,
gives us the annotated binary decision tree Figure~\ref{eight.two}.  We also note that by 
counting the inner nodes
in Figures \ref{replacement.eggs} and \ref{bonus.eggs}, we learn that 
$H_{{\mathcal R},\,2}(4) = 11$ and $H_{{\mathcal B},\,1}(4) = 7$, the subscripts 
$\mathcal R$ and $\mathcal B$ indicating Replacement Eggs and Bonus Eggs, respectively.

\medskip 
To handle a taller building, we will need to grow a bigger tree. Specifically,
to determine how many egg drops are required for a  building
with $N$-stories,
we must construct a tree that has enough depth that the number of
inner nodes is $N$ and the number of  leaves 
is $N+1$.  

This procedure will quickly outgrow the use of paper
and pencil, and will require a computer. Trees are a 
standard data structure and inorder transversal is a standard 
algorithm.  We might say  the problem is solved. But as mathematicians
we would like to do more than simply turn the problem over to a 
computer. In particular,  we would like to answer the
original question of how many egg-drop experiments will be required
or to at least estimate that number.

\subsection{Standard Eggs.} \label{bkvw}
Although complex and clever derivations have been made in the literature for this result, we present a combinatorial argument, which is both concise and elucidating.

\begin{figure*}[htp]\centering    
\resizebox{4.25in}{!}{
\begin{tikzpicture}[->,>=stealth',level/.style={sibling distance = 16cm/2^#1, level distance = 1.5cm}] 

    \node [arn_rn] {2}
child{
   node [arn_rn] {1}
   child{
      node [arn_rn] {0}
   }
   child{
      node [arn_rn] {1}
      child{
         node [arn_rn] {0}
      }
      child{
         node [arn_rn] {1}
         child{
            node [arn_rn] {0}
         }
         child{
            node [arn_rn] {1}
         }
      }
   }
}
child{
   node [arn_rn] {2}
   child{
      node [arn_rn] {1}
      child{
         node [arn_rn] {0}
      }
      child{
         node [arn_rn] {1}
         child{
            node [arn_rn] {0}
         }
         child{
            node [arn_rn] {1}
         }
      }
   }
   child{
      node [arn_rn] {2}
      child{
         node [arn_rn] {1}
         child{
            node [arn_rn] {0}
         }
         child{
            node [arn_rn] {1}
         }
      }
      child{
         node [arn_rn] {2}
         child{
            node [arn_rn] {1}
         }
         child{
            node [arn_rn] {2}
         }
      }
   }
}
          
;
\end{tikzpicture}
}

\caption{Counting the number of eggs in a Standard Egg tree with 2 initial eggs.} \label{BKVW.eggs}
\end{figure*}

\begin{theorem}\label{bkvw-eggs}
	With Standard Eggs, 
	the height of the tallest building for which the strength of the eggs can be determined starting with $k$ eggs and  
	using no more than \mbox{$d$ egg drops}\footnote{This result 
	is consistent with the egg drop number  $\boardman{d}{k}$ defined by Boardman, and a combinatorial argument based on words appears in the section ``Direct counting approach" of \cite{BoardEgg}.} is 
$$H_{{\mathcal S},\,k} (d) = \sum_{i=1}^k  \binom{d}{i}.$$
\end{theorem}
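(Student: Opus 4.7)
The plan is to invoke Proposition \ref{normal-non} and Theorem \ref{the-tree-result} to recast the problem as a question about full binary trees. Any Standard-Eggs algorithm starting with $k$ eggs and using at most $d$ drops may be taken to be non-redundant, and hence represented by a full binary tree whose $N$ inner nodes biject with the floors $\{1,\dots,N\}$. Thus $H_{\mathcal{S},k}(d)$ equals the maximum possible number of inner nodes in such a tree, subject to the constraints illustrated in Figure \ref{BKVW.eggs}: the root has $k$ eggs, each left child has one fewer egg than its parent, each right child has the same count, a node with zero eggs must be a leaf, and every leaf sits at depth at most $d$.

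With these rules, a node at depth $\ell$ (root at depth $0$) reached by a root-to-node path containing $j$ left branches carries exactly $k - j$ eggs, and so can serve as an inner node if and only if $j \leq k - 1$ and $\ell \leq d - 1$. To maximize, I would take every such position to be an inner node and close the tree with leaves wherever one is forced. Since there are $\binom{\ell}{j}$ root-to-node paths of length $\ell$ with $j$ left branches, this yields
$$H_{\mathcal{S},k}(d) = \sum_{\ell=0}^{d-1} \sum_{j=0}^{\min(\ell,\,k-1)} \binom{\ell}{j}.$$
Swapping the order of summation and applying the hockey-stick identity $\sum_{\ell=j}^{d-1}\binom{\ell}{j} = \binom{d}{j+1}$ collapses this to
$$\sum_{j=0}^{k-1}\binom{d}{j+1} = \sum_{i=1}^{k}\binom{d}{i},$$
as desired.

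The main obstacle is justifying that the greedy tree---with every combinatorially allowed position occupied---is both realizable and optimal. Realizability amounts to checking that attaching two children to every eligible inner node never forces a leaf at depth greater than $d$ nor a zero-egg node to host an experiment: each child either lands in another eligible position or becomes a forced leaf at depth $\leq d$, so the construction goes through. Optimality is immediate, since any legal tree must avoid inner nodes at positions violating $j \leq k - 1$ or $\ell \leq d - 1$, so no legal tree can have more inner nodes than the greedy one. Aside from that verification, the proof is the change of summation order followed by the hockey-stick identity.
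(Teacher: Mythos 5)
Your proof is correct, but it takes a different route from the paper's. The paper counts \emph{leaves}: each root-to-leaf path is encoded as a binary word ($0$ for a break, $1$ for a survival), and paths that exhaust their eggs before depth $d$ are padded with trailing $1$'s so that all leaves biject with binary words of length exactly $d$ containing at most $k$ zeros; this gives $\sum_{i=0}^{k}\binom{d}{i}$ leaves directly, and the inner-node count follows by subtracting $1$. You instead count \emph{inner nodes} directly, stratifying by depth $\ell$ and number of left branches $j$, observing that a position is eligible iff $\ell\le d-1$ and $j\le k-1$, and collapsing the resulting double sum $\sum_{\ell}\sum_{j}\binom{\ell}{j}$ with the hockey-stick identity. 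The trade-off: the paper's padding bijection lands on the answer with no binomial identity needed, but the bijection itself requires a moment's care (distinct truncated words stay distinct after padding); your version needs the hockey-stick identity and the realizability/optimality check for the greedy tree, but that check is clean---each position of the infinite binary tree hosts at most one node, eligibility is a property of the position alone, and the greedy tree closes off with leaves at depth $\le d$---so both the upper bound and its attainment are transparent. Your arithmetic checks out (e.g.\ $k=2$, $d=4$ gives $1+2+3+4=10=\binom{4}{1}+\binom{4}{2}$, matching Figure~\ref{BKVW.eggs}), and the argument is a valid alternative proof.
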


\begin{proof}
The number of leaves in the 
tree representing the algorithm
is identical to the number of paths from the root to the leaves with $i$ breaks,
$i$ running from $0$ up to $k$.  Each such path uniquely corresponds to a binary word
with a $0$ representing going left and a $1$ representing going right. If the height of the
tree is $d$, we would like to consider binary words of length $d$.  But if there are $k$ zeros in the word,
that means all the eggs were broken, the path ends, and so does the word. 
Nonetheless, if we put additional $1$'s after the last of the $k$ zeros, we can bring the length
of the word up to $d$ while maintain the one-to-one correspondence between binary words and paths
in the tree.

For a particular $i$,  the number of binary words of length $d$ that
contain exactly $i\ \ 0$'s is  $\binom{d}{i}$. 
Adding this over all possible $i$ from $0$ to $k$, we get
\[
\sum_{i=0}^k  \binom{d}{i}
\,.
\]
The number of inner nodes is $1$ fewer than the number of leaves, so
\[
H_{{\mathcal S},\,k}(d) =  \sum_{i=1}^k  \binom{d}{i}
\,.
\]
\end{proof}

\subsection{Replacement Eggs.}
To determine the egg-drop numbers for Replacement Eggs, we need to make 
a small digression into the topic of $k$-bonacci numbers.

Recall that the $k$-bonacci numbers $F_\ell^{(k)}$, ($k \geq 1$) are a generalization of
the Fibonacci numbers  defined by the initial values
\begin{equation}\label{fib.init}
F_\ell^{(k)} = 
\begin{cases}
       0, & 0 \le \ell<k-1, \\
       1, & \ell=k-1, \\
\end{cases}
\end{equation}
and the recursion
\begin{equation}\label{fib.recur}
F_\ell^{(k)} =  \sum_{i=1}^k F_{\ell-i}^{(k)},\hbox{\rm\ \ for\ \ }k\leq \ell
\,.
\end{equation}

Following the idea used in the proof of Theorem \ref{bkvw-eggs}, 
we see that we should examine the number of binary words of a
given length $n$ that \textbf{do not} contain $k$ consecutive $0$'s.

\begin{lemma} \label{consecutive}
There are $F^{(k)}_{n+k}$ binary words of length $n \geq 0$ lacking $k$ consecutive $0$'s.
\end{lemma}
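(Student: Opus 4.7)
The plan is to let $a_n$ denote the number of binary words of length $n$ that avoid $k$ consecutive zeros, and then show $a_n = F^{(k)}_{n+k}$ by matching the $k$-bonacci recursion together with sufficiently many initial values.

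First I would establish a recursion for $a_n$ by decomposing each admissible word according to its leading block of zeros. For $n \geq k$, any word avoiding $k$ consecutive zeros must begin with $j$ zeros followed by a $1$, where $0 \leq j \leq k-1$, since a block of $k$ zeros at the start would already violate the constraint; the remaining suffix is an arbitrary admissible word of length $n-j-1$. Grouping by $j$ and re-indexing with $i = j+1$ gives $a_n = \sum_{i=1}^{k} a_{n-i}$ for $n \geq k$, which is exactly the recursion~(\ref{fib.recur}) satisfied by the shifted sequence $n \mapsto F^{(k)}_{n+k}$.

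For the base cases, I would note that $a_n = 2^n$ for $0 \leq n < k$, since every binary word shorter than $k$ trivially avoids $k$ consecutive zeros. On the $k$-bonacci side, a short induction using (\ref{fib.init}) and (\ref{fib.recur}) shows $F^{(k)}_{k+m} = 2^m$ for $0 \leq m < k$: assuming the claim for all smaller nonnegative values, the recursion splits as $F^{(k)}_{k+m} = \sum_{i=1}^{m} 2^{m-i} + F^{(k)}_{k-1} + \sum_{i=m+2}^{k} F^{(k)}_{k+m-i}$, where the first sum uses the inductive hypothesis, the middle term equals $1$, and the final sum vanishes because each index $k+m-i$ lies in $[0,k-2]$. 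This collapses to $(2^m - 1) + 1 = 2^m$, as required. Thus $a_n$ and $F^{(k)}_{n+k}$ agree on $k$ consecutive initial values and satisfy the same order-$k$ linear recursion, so they coincide for all $n \geq 0$.

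The only delicate step is bookkeeping around the initial conditions. The convention in (\ref{fib.init}) places the lone $1$ at index $k-1$ rather than $0$, and this offset is precisely what makes $F^{(k)}_{k+m} = 2^m$ come out correctly on the initial range; verifying that identity is the main point requiring care, and once it is in hand, the rest of the proof is a direct comparison of two sequences defined by the same recursion with matching seed values.
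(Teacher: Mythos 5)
Your proof is correct and takes essentially the same approach as the paper: both derive the order-$k$ recursion $a_n=\sum_{i=1}^{k}a_{n-i}$ by peeling off the maximal run of zeros adjacent to a $1$ (you strip the leading run before the first $1$, the paper strips the trailing run after the last $1$ --- mirror images under word reversal) and then match against the $k$-bonacci initial values. Your explicit induction showing $F^{(k)}_{k+m}=2^m$ for $0\le m<k$ is welcome, since the paper merely asserts this fact (and with a typo, writing $2^k$ where $2^n$ is meant).
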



\begin{proof}
Let $n$ be the length of a binary word. For $n<k,$ all words qualify, and recall that $F^{(k)}_{n+k} = 2^k$ for $0 \leq n<k$. For $n \geq k$, every qualifying word must have a trailing $1$ followed by $i<k$ zeros. This word can be formed by a qualifying word of length $n-i-1$ followed by the $1$ and $i$ zeros. The total number of ways to do this is
\[
\sum_{i=0}^{k-1} F^{(k)}_{n-i-1+k} = \sum_{i=1}^{k} F^{(k)}_{n+k-i} = F^{(k)}_{n+k}
\,.
\]
\end{proof}

\begin{remark}
Lemma \ref{consecutive}  is also implied by the definition of $p_n(k)$, and equations $(1)$, $(10)$, and $(12)$ in \cite{Shane:1973:FPF}, using the version of the $k$-bonacci numbers (there $n$-bonacci) shifted so that $F_{n,0}=1$ and $F_{n,-r}=0$. 
This was built upon and clarified by \mbox{Lemma 2.2} in \cite{Philippou:1982:WTC}, as $A_n^{(k)}=f^{(k)}_{n+1}$, this time using $f^{(k)}$ for the $k$-bonacci numbers with the same shift, which translates to the result with a small amount of algebraic manipulation.
\end{remark}

\begin{figure*}[htp]\centering    
\resizebox{4.25in}{!}{
\begin{tikzpicture}[->,>=stealth',level/.style={sibling distance = 16cm/2^#1, level distance = 1.5cm}] 

    \node [arn_rn] {2}
child{
   node [arn_rn] {1}
   child{
      node [arn_rn] {0}
   }
   child{
      node [arn_rn] {2}
      child{
         node [arn_rn] {1}
         child{
            node [arn_rn] {0}
         }
         child{
            node [arn_rn] {2}
         }
      }
      child{
         node [arn_rn] {2}
         child{
            node [arn_rn] {1}
         }
         child{
            node [arn_rn] {2}
         }
      }
   }
}
child{
   node [arn_rn] {2}
   child{
      node [arn_rn] {1}
      child{
         node [arn_rn] {0}
      }
      child{
         node [arn_rn] {2}
         child{
            node [arn_rn] {1}
         }
         child{
            node [arn_rn] {2}
         }
      }
   }
   child{
      node [arn_rn] {2}
      child{
         node [arn_rn] {1}
         child{
            node [arn_rn] {0}
         }
         child{
            node [arn_rn] {2}
         }
      }
      child{
         node [arn_rn] {2}
         child{
            node [arn_rn] {1}
         }
         child{
            node [arn_rn] {2}
         }
      }
   }
}

;
\end{tikzpicture}
}
\caption{Replacement Eggs.} \label{replacement.eggs}
\end{figure*}

\begin{theorem}
With Replacement Eggs, the height of the tallest building for which the strength of the 
eggs can be determined starting with $k$ eggs and  
using no more than \mbox{$d$ egg drops} is 
$$
H_{{\mathcal R},\,k}(d) = -1 + \sum_{i=0}^{\lfloor d/(k+1) \rfloor} (-1)^{i} \binom{d-ik}{i}2^{d-i(k+1)}
\,.
$$
\end{theorem}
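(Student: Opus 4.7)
The plan is to introduce $L_k(d) := H_{\mathcal{R},\,k}(d) + 1$ to count the leaves of an optimal depth-$d$ Replacement Eggs decision tree starting with $k$ eggs, and then follow a three-step route: derive a linear recurrence for $L_k(d)$, convert it into a generating function, and extract the coefficient. By Theorem~\ref{the-tree-result} together with Proposition~\ref{normal-non}, the optimum is attained by a full binary decision tree whose structure is constrained only by the egg dynamics, and in any full binary tree the leaf count exceeds the inner-node count (i.e., the number of distinguishable floors) by exactly one.

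Step~1 is to set up the recurrence. At a non-leaf node with current egg count $j$, the left subtree starts with $j-1$ eggs and the right subtree starts with $k$ eggs (fully restored upon survival). Writing $L_{k,j}(d)$ for the leaf count of the corresponding optimal subtree, we have $L_{k,j}(d) = L_{k,j-1}(d-1) + L_k(d-1)$ for $1 \le j \le k$ and $d \ge 1$, with $L_{k,0}(d) = L_{k,j}(0) = 1$. Iterating the left descent $k$ times to reach $L_{k,0}(d-k) = 1$ yields
$$L_k(d) = 1 + \sum_{i=1}^{k} L_k(d-i), \qquad d \ge k,$$
while for $0 \le d < k$ no chain of $k$ breaks is possible, so every binary path of length $d$ occurs and $L_k(d) = 2^d$.

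Step~2 is to linearize. Subtracting the recurrence for $L_k(d)$ and $L_k(d-1)$ eliminates the constant and gives
$$L_k(d) = 2\, L_k(d-1) - L_k(d-k-1), \qquad d \ge k+1,$$
still with $L_k(d) = 2^d$ for $0 \le d \le k$ (the case $d = k$ being confirmed directly from the Step~1 recurrence as $L_k(k) = 1 + 2^{k-1} + \cdots + 2 + 1 = 2^k$).

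Step~3 is a generating function computation. Multiplying by $x^d$ and summing, the initial conditions collapse exactly so that
$$\sum_{d \ge 0} L_k(d)\, x^d = \frac{1}{1 - 2x + x^{k+1}}.$$
Writing the right-hand side as $\sum_{n\ge 0} (2x - x^{k+1})^n$ via the geometric series and applying the binomial theorem to $(2x - x^{k+1})^n$, the substitution $n = d - ik$ (with $i \le n$, i.e.\ $i \le \lfloor d/(k+1)\rfloor$) extracts the coefficient of $x^d$ as $\sum_{i=0}^{\lfloor d/(k+1)\rfloor} (-1)^i \binom{d-ik}{i} 2^{d-i(k+1)}$. Subtracting $1$ yields the stated formula for $H_{\mathcal{R},\,k}(d)$. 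The main obstacle is Step~1: the state of the system is not captured by $k$ alone, so the auxiliary variable $L_{k,j}$ is needed, and one must verify that the telescoping along a left-spine of length $k$ terminates cleanly at $L_{k,0}=1$; once the clean recurrence for $L_k(d)$ is in hand, everything afterward is a routine verification of the generating-function identity $L(x)(1 - 2x + x^{k+1}) = 1$ against the initial conditions, followed by elementary coefficient extraction.
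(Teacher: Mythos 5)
Your proof is correct, but it takes a genuinely different route from the paper's. The paper bijects the leaves of the depth-$d$ tree with binary words: it pads each short word (which must end in $k$ consecutive $0$'s) out to length $d$, strips the final $1$ and its trailing $0$'s, and thereby reduces the count to the number of binary words of each length $0,\dots,d-1$ lacking $k$ consecutive $0$'s; Lemma \ref{consecutive} identifies these counts as $k$-bonacci numbers, so the leaf count is a partial sum of the $k$-bonacci sequence, and the closed form is then obtained by invoking the identity (\ref{dunkel}), which the paper does not prove but attributes to Dunkel and a recent combinatorial rediscovery. You instead work directly on the tree: introducing the auxiliary egg-count parameter $j$, telescoping down the left spine to get $L_k(d)=1+\sum_{i=1}^{k}L_k(d-i)$ with $L_k(d)=2^d$ for $d\le k$, linearizing to $L_k(d)=2L_k(d-1)-L_k(d-k-1)$, and extracting the coefficient of $x^d$ from $1/(1-2x+x^{k+1})$. (Your recurrence is of course the recurrence satisfied by the $k$-bonacci partial sums, so the two arguments meet in the middle.) The trade-off: your argument is fully self-contained---it actually \emph{proves} the binomial closed form rather than citing it---and sidesteps the word-counting and padding bookkeeping entirely; the paper's argument makes the $k$-bonacci connection explicit (which is of independent interest and reuses Lemma \ref{consecutive}) at the cost of outsourcing the final identity. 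All your individual steps check out: the decoupling of left and right subtree optimizations justifies the recurrence on maxima, the boundary case $L_k(k)=2^k$ is verified, and the coefficient extraction with $n=d-ik$ and the constraint $i\le\lfloor d/(k+1)\rfloor$ reproduces the stated sum; the values agree with Figure \ref{all.table}.
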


\begin{proof}
For replacement eggs, we can only exhaust our eggs if we get $k$ consecutive breaks; 
any other sequence will lead to eggs being replenished. We can look at every path from the 
root as a binary word, using $0$ for a break and $1$ for a success. 

We will count the number of binary words corresponding to a tree of height $d$. 
For any of our words that are shorter than $d$ bits, we would like to add
$0$'s to bring them up to length $d$. Such words shorter than
$d$ must end in $k$ consecutive $0$'s, so if they are distinct
before adding $0$'s, they remain distinct after $0$'s are added.

One word consists only of $0$'s.   Every other word
will contain at least one $1$. Since there is a $1$ in the word, 
there is a last $1$ in the word and that $1$,
if followed by anything, is followed by $0$'s

To count the number of words that contain a $1$, remove the last $1$ in the
word and remove all the $0$'s that follow that last $1$.
What remains is 
a binary word lacking $k$ consecutive $0$'s having length somewhere between $0$ and $d-1$.
Thus, the number of leaves in the tree, $L(d)$, is  $1$ for the single path without any $1$'s plus
the sum over $i$ going from $0$ to $d-1$ of the number of  binary words 
lacking $k$ consecutive $0$'s having length $i$. By Lemma \ref{consecutive}, the number
of leaves is
$$
L(d) = 1+ \sum_{i=0}^{d-1}  F^{(k)}_{i + k} 
= 1+ \sum_{i=k}^{d+k-1}  F^{(k)}_{i}
=   \sum_{i=0}^{d+k-1}  F^{(k)}_{i}
\,,
$$
where we have used the $k$-bonacci initial conditions (\ref{fib.init}).

In fact, the partial sums of the sequence  $k$-bonacci numbers can be expressed in terms of
a sum of products of binomial coefficients and powers of $2$ as follows:
\begin{equation}\label{dunkel}
\sum_{i=0}^{d+k-1}  F^{(k)}_{i} =  \sum_{i=0}^{\lfloor d/(k+1) \rfloor} (-1)^{i} \binom{d-ik}{i}2^{d-i(k+1)}
\,.
\end{equation}
Equation \ref{dunkel} is not well-known. 
While it follows easily from work of 
Otto Dunkel published in 1925,\footnote{To obtain (\ref{dunkel}) from Dunkel's work,
compare the equation for $P_2(n)$ in section 6 of \cite{dunkel1925solutions}
to the equation for 
$P_2(n)$ in section 10 of the same paper.} it 
was recently rediscovered; a combinatorial 
proof is given in  \cite{https://doi.org/10.48550/arxiv.2208.01224}.

The height of the tallest building, $H_{{\mathcal R},\,k}(d)$,  is $1$ fewer than the
number of leaves, completing the proof.
\end{proof}

\subsection{Bonus Eggs.}

For Standard Eggs or Replacement Eggs, starting with only $1$ egg is not interesting,
but for  Bonus Eggs starting with $1$ egg provides some useful information.
The tree in Figure~\ref{bonus.eggs} shows the number of eggs remaining
as we progress through the first four egg drops.
If you extend   the tree in Figure~\ref{bonus.eggs}  down to depth $9$, you will see
that the number of leaves with $0$ eggs 
that occur  at depth $1,\ 3,\ 5,\ 7,\ \hbox{\rm and\ }9$, respectively,
is $1,\ 1,\ 2,\ 5,\ \hbox{\rm and\ }14$.  The first $5$ Catalan numbers
are  $1,\ 1,\ 2,\ 5,\ \hbox{\rm and\ }14$.  The Catalan numbers
appear because the $n$th Catalan number is the cardinality of the set of
sequences of $n\ \ {+}1$'s and $n\ \ {-}1$'s with non-negative partial sums. 
To run out of eggs at exactly depth $2n-1$ one must have broken $n$ eggs---those
are the $n\ \ {-}1$'s---and have received  $n-1$ bonus eggs---those are $n-1$ of the ${+}1$'s, the $n$th 
${+}1$ is the starting egg.
D. F. Bailey in \cite{Bailey_Counting} generalized the Catalan number construction
by considering   sequences of $n\ \ {+}1$'s and $m\ \ {-}1$'s with non-negative partial sums. 
We  further generalize Bailey's work\footnote{Relative to Bailey's paper, we will reverse the roles
of $m$ and $n$.} and apply the results in the present setting
of Bonus Eggs.  

\begin{figure*}[htp]\centering    
\resizebox{4.25in}{!}{
\begin{tikzpicture}[->,>=stealth',level/.style={sibling distance = 16cm/2^#1, level distance = 1.5cm}] 

    \node [arn_rn] {1}
child{
   node [arn_rn] {0}
}
child{
   node [arn_rn] {2}
   child{
      node [arn_rn] {1}
      child{
         node [arn_rn] {0}
      }
      child{
         node [arn_rn] {2}
         child{
            node [arn_rn] {1}
         }
         child{
            node [arn_rn] {3}
         }
      }
   }
   child{
      node [arn_rn] {3}
      child{
         node [arn_rn] {2}
         child{
            node [arn_rn] {1}
         }
         child{
            node [arn_rn] {3}
         }
      }
      child{
         node [arn_rn] {4}
         child{
            node [arn_rn] {3}
         }
         child{
            node [arn_rn] {5}
         }
      }
   }
}

;
\end{tikzpicture}
}
\caption{Bonus Eggs.} \label{bonus.eggs}
\end{figure*}

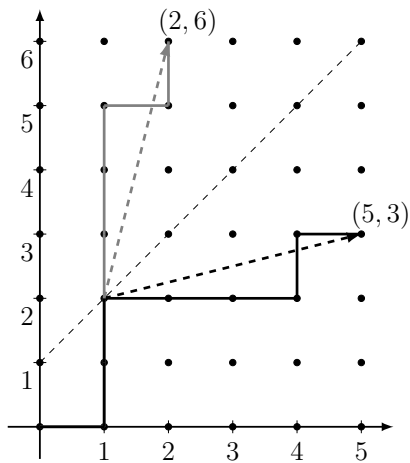
\begin{figure*}[htp]\centering    
\resizebox{2.25in}{!}{
\begin{tikzpicture}
    [
        dot/.style={circle,draw=black, fill,inner sep=1pt},
    ]

\newcommand\xmax{5}
\newcommand\ymax{6}

\tikzmath{
	\themin  = min(\xmax, \ymax);
};

\draw[->,thick,-latex] (0,-.5) -- (0,\ymax+ .5);
\draw[->,thick,-latex] (-.5,0) -- (\xmax + .5,0);

\foreach \y in {0,...,\ymax}
	\foreach \x in {0,...,\xmax}
    	\node[dot] at (\x,\y){};
    
\foreach \x in {1,...,\xmax}
    \draw (\x,.1) -- node[below,yshift=-1mm] {\x} (\x,-.1);
\foreach \y in {1,...,\ymax}
    \draw (.1,\y) -- node[below,xshift=-2mm] {\y} (-.1,\y);


\draw[dashed] (0,1) -- (\themin, \themin + 1);

\foreach \z [remember=\xf as \xa  (initially 0), remember=\yf as \ya  (initially 0) ] in {1, -1, -1,   -1, -1, -1, 1, -1} {
	\tikzmath{
		\xs = \xa;
		\ys = \ya;
		\xf = \xa + (\z+1)/2;
		\yf = \ya  + (-\z+1)/2;
	}
	\draw[very thick,gray] (\xs,\ys) -- (\xf, \yf);
}

\foreach \z [remember=\xf as \xa  (initially 0), remember=\yf as \ya  (initially 0) ] in {1, -1, -1,    1, 1, 1, -1, 1} {
	\tikzmath{
		\xs = \xa;
		\ys = \ya;
		\xf = \xa + (\z+1)/2;
		\yf = \ya  + (-\z+1)/2;
	}
	\draw[very thick,black] (\xs,\ys) -- (\xf, \yf);
}


\draw[->,dashed,very thick] (1,2) --  (5, 3);
\draw[->,dashed,very thick, gray] (1,2) --  (2, 6);

\newcommand\len{.3}
\node[] at (5+\len,3+\len) {$(5,3)$};
\node[] at (2+\len,6+\len) {$(2,6)$};

\end{tikzpicture}

}
\medskip
\caption{Reflection of the solid path over $y=x+1$.} \label{reflect}
\end{figure*}

\begin{definition} Let $k$, $m$ and $n$ be 
non-negative integers with    $ n\leq m +  k$.
Let  $G_k(m,n)$ denote the number of sequences $a_1, a_2,\dots, a_{m+n}$ of 
$m$\hskip .45em$+1$'s and $n$\hskip.45em$-1$'s  for which every partial sum is 
greater than $-k$, that is,  
\begin{equation}\label{stay.alive}
\hbox{\rm if\ \ }1\leq i \leq m+ n, \hbox{\rm\ \ then\ \ }   a_1+a_2+\cdots+a_i > -k\,.
\end{equation}
Note  that because the hypothesis of (\ref{stay.alive}) is false when $m=n=0$,
the statement itself is satisfied by the empty sequence. Thus
$G_k(0,0) = 1$ holds for all non-negative $k$.
\end{definition} 

\begin{theorem}\label{bailey.variant.thm}
For $k$, $m$ and $n$  non-negative integers with  $n\leq m+k$,
it holds that
\begin{equation}\label{bailey.variant}
G_k(m,n) = 
\left\{\begin{array}{cl}
\displaystyle \binom{m+n}{n}  &\hbox{\rm if\ } n<k , \\[2ex]
\displaystyle \binom{m+n}{n} - \binom{m+n}{n-k}&\hbox{\rm otherwise}.
\end{array}
\right.
\end{equation}
\end{theorem}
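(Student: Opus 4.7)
The plan is to recast the count as a lattice-path problem and apply the reflection principle. I would encode each sequence $a_1, \ldots, a_{m+n}$ as the monotone lattice path starting at $(0,0)$ whose $i$th step is $(1,0)$ when $a_i = +1$ and $(0,1)$ when $a_i = -1$. The $i$th partial sum $a_1 + \cdots + a_i$ then equals $x_i - y_i$, where $(x_i, y_i)$ is the vertex reached after $i$ steps, so condition (\ref{stay.alive}) becomes the geometric requirement that every such vertex (for $i \geq 1$) lies strictly below the line $y = x + k$. Each path ends at $(m,n)$, so the total number of paths is $\binom{m+n}{n}$, and it suffices to subtract the number of \emph{forbidden} paths, those that touch $y = x + k$ somewhere.

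When $n < k$, every vertex of every path satisfies $y \leq n < k \leq x + k$, so no path is forbidden and $G_k(m,n) = \binom{m+n}{n}$, handling the first branch of (\ref{bailey.variant}).

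For $n \geq k$, I would apply reflection across the line $y = x + k$, which sends $(a,b) \mapsto (b - k, a + k)$ and in particular carries $(0,0)$ to $(-k, k)$, as suggested by Figure~\ref{reflect}. Given a forbidden path, reflect the portion from $(0,0)$ to its first contact with $y = x + k$; the resulting path runs from $(-k, k)$ to $(m,n)$ and uses $m + k$ horizontal and $n - k$ vertical steps. Conversely, any lattice path from $(-k, k)$ to $(m, n)$ starts at a vertex with $y - x = 2k$ and ends at a vertex with $y - x = n - m \leq k$ (here the hypothesis $n \leq m + k$ enters), so because $y - x$ changes by $\pm 1$ at each step, the path must cross $y = x + k$, and reflecting its initial segment up to that first crossing yields a forbidden path from $(0,0)$. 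This is a bijection, so the number of forbidden paths is $\binom{m+n}{n-k}$ and subtracting gives $G_k(m,n) = \binom{m+n}{n} - \binom{m+n}{n-k}$.

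The one point requiring genuine care is checking that the two reflection maps are mutually inverse and, specifically, that every lattice path from $(-k, k)$ to $(m,n)$ really does cross $y = x + k$; this is exactly where the hypothesis $n \leq m + k$ is used. Aside from that, the argument is the classical reflection bijection for ballot-type counting problems.
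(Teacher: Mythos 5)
Your proof is correct and follows essentially the same route as the paper: both encode the sequences as monotone lattice paths from $(0,0)$ to $(m,n)$ and apply the reflection principle across $y=x+k$ to count the paths touching that line, obtaining $\binom{m+n}{n-k}$ bad paths. The only (immaterial) difference is that you reflect the prefix up to the first contact point, sending the start to $(-k,k)$, whereas the paper swaps step types in the suffix after the first contact, sending the endpoint to $(n-k,m+k)$.
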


\begin{remark}
Notice that   the theorem tells us that 
$G_1(m,m)$  equals the $m$th Catalan number, so 
it should not be surprising that 
our proof of  the  theorem is similar to  
a classic construction used in studying Catalan numbers. 
\end{remark}

\begin{proof}  
We will translate each partial sum of plus and minus $1$'s
into a path in the cartesian plane that begins at $(0,0)$ and, in some order,
takes $m$ steps to the right and $n$ steps up ending 
at $(m,n)$. We want to count the
number of such paths that do not touch the line $y=x+k$.

\medskip
\noindent
\textbf{Assuming \boldmath $n< k.$}

\noindent
The total number of paths that take $m$ steps to the right and $n$ steps up
is $\binom{n+m}{n}$. If  $n<k$, then no such path can reach the line
$y=x+k$. 

\medskip
\noindent
\textbf{Assuming} {\boldmath$\ k\leq n$.}

\noindent
As part of the definition  we have assumed  that $n\leq m+k$.
If $n= m+k$, then any path  that take $m$ steps to the right and $n$ steps up
ends on the line $y=x+k$. Thus $G_k(m,m+k) = 0$ and that agrees with the right-hand side
of (\ref{bailey.variant}). So from now on we assume that $n< m+k$.

\medskip
The total number of paths that take $m$ steps to the right and $n$ steps up
is $\binom{n+m}{n}$. Some of those paths may be ``bad'' paths that
touch the line $y=x+k$. So we need to count the bad paths,
and to do that we will show that the  bad paths
from $(0,0)$ to $(m,n)$
can be put into one-to-one correspondence with 
the  paths from $(0,0)$ to $(n-k,m+k)$.

Any bad path has a first point $(x,x+k)$ where it contacts the line $y=x+k$.
The vector from $(x,x+k)$ to $(m,n)$ is $(m-x,n-x-k)$. If 
in that vector we swap steps to
the right for steps up and swap steps up for steps to the right, we obtain the
vector $(n-x-k,m-x)$. Proceeding from $(x,x+k)$ via the new vector takes
us to $(n-k,m+k)$   (see Figure  \ref{reflect}), a point above the line $y=x+k$ because $n<m+k$.
On this new path from $(0,0)$ to $(n-k,m+k)$, the point $(x,x+k)$ is the
first point where the path hits the line. 

Every path of $n-k$ steps to the right and $m+k$ steps up will cross the
line $y=x+k$ somewhere, so there will be a first point of contact, say 
$(x,x+k)$. The vector from $(x,x+k)$  to  $(n-k,m+k)$ is 
$(n-k-x, m-x)$  Again swapping right steps and steps up, we obtain
the vector $(m-x, n-k-x)$.  Proceeding from $(x,x+k)$ via the new vector takes
us to $(m,n)$. On this new path from $(0,0)$ to $(m,n)$, the point $(x,x+k)$ is the
first point where the path hits the line.

From the above constructions, we see that  the number of bad paths 
$(0,0)$ to $(m,n)$
equals the number of paths with
 $n-k$ steps to the right and $m+k$ steps up, that is, $\binom{m+n}{n-k}$.
 It follows that
 $$
 G_k(m,n) = \binom{m+n}{n} - \binom{m+n}{n-k}
 \,.
 $$
\end{proof}

\begin{theorem}
With  Bonus Eggs starting with $k$ eggs, 
the tree of depth $d<k$ has $2^d$ leaves and $2^d-1$ inner nodes.
For the tree of depth $k\leq d$, set
$$
\alpha = \left\lfloor  \frac{d-k}{2}  \right\rfloor \hbox{\rm\ \ and\ \ } 
\beta =  \left\lfloor  \frac{d-k+1}{2}  \right\rfloor
$$ 
then the tree has
$$
Z_k(d) = \sum_{m=0}^\alpha  \frac{k}{2m+k}\ \binom{2m+k}{m}
$$
leaves with  no remaining eggs  and
$$
M_k(d) = \sum_{n= \beta}^{\beta+k-1} \binom{d}{n}
$$
leaves that still have eggs.

The height of the tallest building for which the strength of the 
eggs can be determined starting with $k$ eggs and  
using no more than $d$ egg drops
is  $2^d-1$ when $d<k$ and when $k\leq d$ is 
$$
H_{{\mathcal B},\,k}(d) =
\sum_{m=1}^\alpha   \frac{k}{2m+k}\ \binom{2m+k}{m}
+ 
\sum_{n= \beta}^{\beta + k -1} \binom{d}{n}
\,.
$$
\end{theorem}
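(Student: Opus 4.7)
The plan is to encode each root-to-leaf path in an optimal Bonus Eggs tree as a sequence of $\pm 1$'s, writing $+1$ for a right branch (success, one bonus egg) and $-1$ for a left branch (break, one egg lost). With initial stock $k$, the number of eggs remaining after $i$ steps equals $k$ plus the $i$-th partial sum, so a node is reachable iff every intermediate partial sum along its path is $>-k$. Leaves of the optimal tree split cleanly into two classes: \emph{exhaustion leaves}, whose path first reaches partial sum $-k$ at its last step (eggs have just run out), and \emph{survival leaves}, whose path has length exactly $d$ with every partial sum, including the final one, still $>-k$. Since a full binary tree has one more leaf than inner nodes, once we have the counts $Z_k(d)$ and $M_k(d)$ we immediately get $H_{{\mathcal B},k}(d) = Z_k(d) + M_k(d) - 1$. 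The easy case $d<k$ is disposed of first: no length-$d$ path can accumulate $k$ minuses, so no exhaustion leaf exists and the optimal tree is the complete binary tree of depth $d$, with $2^d$ leaves and $2^d - 1$ inner nodes.

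For $Z_k(d)$, an exhaustion path with $m$ pluses and $\ell - m$ minuses has total sum $2m - \ell = -k$, forcing length $\ell = k + 2m$; its last step must be a $-1$, so the length-$(k+2m-1)$ prefix consists of $m$ pluses and $k+m-1$ minuses with every partial sum $>-k$. Theorem \ref{bailey.variant.thm} then gives the prefix count
\[
G_k(m,\,k+m-1) \;=\; \binom{2m+k-1}{m} - \binom{2m+k-1}{m-1},
\]
valid for all $m \geq 0$ under the convention that a binomial with negative lower index is zero. A standard manipulation based on $\tfrac{1}{m} - \tfrac{1}{m+k} = \tfrac{k}{m(m+k)}$ collapses this to the ballot-style form $\tfrac{k}{2m+k}\binom{2m+k}{m}$, and summing over $0 \leq m \leq \alpha$ (the range forced by $k + 2m \leq d$) yields the stated expression for $Z_k(d)$.

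For $M_k(d)$, Theorem \ref{bailey.variant.thm} says that a survival path with $n$ minuses contributes $G_k(d-n,n) = \binom{d}{n} - \binom{d}{n-k}$ (the subtracted term being zero when $n<k$). The range of $n$ is $0 \leq n \leq U$, where $U$ is the largest integer for which the final partial sum $d - 2n$ exceeds $-k$; this works out to $U = \lfloor (d+k-1)/2 \rfloor$. Reindexing the subtracted piece by $j = n - k$ telescopes the total:
\[
M_k(d) \;=\; \sum_{n=0}^{U}\binom{d}{n} \,-\, \sum_{j=0}^{U-k}\binom{d}{j} \;=\; \sum_{n=U-k+1}^{U}\binom{d}{n}.
\]
A parity check on $d-k$ then verifies that $U - k + 1 = \beta$ and $U = \beta + k - 1$ in both parities, producing the stated formula for $M_k(d)$.

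Combining the two counts and subtracting $1$ removes exactly the $m = 0$ term of $Z_k(d)$ (equal to $1$), giving the stated formula for $H_{{\mathcal B},k}(d)$. The one genuinely delicate point is the bookkeeping in the telescoping step: matching the cutoff $U$ to the stated $\beta$ requires treating the two parities of $d-k$ separately, and the uniform application of Theorem \ref{bailey.variant.thm} across the whole range depends on reading negative-lower-index binomials as zero so that a single $G_k$ formula covers both ends of the summation.
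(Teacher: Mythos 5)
Your proposal is correct and follows essentially the same route as the paper: the same split of leaves into exhaustion paths (counted via $G_k(m,m+k-1)$ at depths $k+2m$) and depth-$d$ survival paths (counted via $G_k(d-n,n)$), the same reduction to the ballot-style form $\frac{k}{2m+k}\binom{2m+k}{m}$, and the same telescoping with the cutoff $n^*=\lfloor(d+k-1)/2\rfloor$ identified with $\beta+k-1$. The only cosmetic difference is that you make explicit the observation that the $-1$ in $H_{{\mathcal B},k}(d)=Z_k(d)+M_k(d)-1$ cancels the $m=0$ term of $Z_k(d)$, which the paper leaves implicit.
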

\begin{proof}
The result is clear for $d<k$, so we will assume $k\leq d$.

\medskip
\noindent
\textbf{Paths in the tree that end with no eggs left.}
The first depth at which all $k$ eggs can be broken
is clearly $k$. Also notice that the number of eggs remaining
unbroken at depth $i$  always has the same parity as $i+k$. 
So we need to consider paths that end with no eggs left
at depth $k+2m$, where $m$ ranges from $0$ to $\lfloor (d-k)/2 \rfloor$.

\medskip
Each failure results in $-1$ egg
and each success results in $+1$ egg.
The supply of eggs is exhausted when there have been 
$k$ more failures than successes. Suppose the
supply of eggs was exhausted exactly on trial $k+ 2m$,
where $k+ 2m \leq d$.
Immediately before trial $k+2m$  we know there must have been exactly 
one egg left. Say on those previous
$k+2m-1$ trials there had been $a$ successes
and $b$ failures. To have one egg left, we must have $k+a-b = 1$. 
To account for $k+2m-1$ trials, we must have $a+b=k+2m-1$.
Solving those equations for $a$ and $b$, we find that 
$a=m$ and $b = m+k-1$.

Because we ran out of eggs on trial 
$k+2m$ and not  earlier, the $a$ successes
and $b$ failures can be thought of as a
sequence of 
$a$\hskip .45em$+1$'s and $b$\hskip.45em$-1$'s  
such that the partial sums  are all greater than $-k$.  
The number of such sequences is  
$G_k(a,b) = G_k(m,m+k-1)$.  

Thus the number of paths of depth not exceeding $d$ that
end with no eggs remaining equals
$$
\sum_{m = 0}^{\alpha} G_k(m, m+k-1)
\,.
$$

By (\ref{bailey.variant}) we see that for $1\leq m$
\begin{eqnarray*}
G_k(m,m+k-1) &=& \binom{2m+k-1}{m+k-1} -\binom{2m+k-1}{m-1}\\[0.5ex]
&=&  \frac{(2m+k-1)!}{m!\ (m+k-1)!} -  \frac{(2m+k-1)!}{(m-1)!\ (m+k)!} \\[0.5ex]
&=&  \frac{(2m+k-1)!}{m!\ (m+k)!}\ \Big[  (m+k)  -  m\Big]\\[0.5ex]
&=& \frac{k}{2m+k}\ \binom{2m+k}{m}
\,.
\end{eqnarray*}
Also note that the equation $G_k(m,m+k-1) = \frac{k}{2m+k}\ \binom{2m+k}{m}$ remains valid when $m=0$.

\medskip
\noindent
\textbf{Paths in the tree  that reach depth \boldmath{$d$} with eggs still remaining.}
For this part of the proof, let $n$ be the number of failures that occur in the $d$ experiments.
The number of successes is $d-n$, and because  there are still eggs remaining unbroken
at depth $d$, we have $1\leq k+  (d-n)  -n$. Thus the values of $n$ range from $0$
up to  $\lfloor (d+k-1)/2 \rfloor $. Let $n^*$ denote this last value.

A path in the tree that reaches depth $d$ with eggs remaining corresponds to 
sequence of 
$d-n$\hskip .45em$+1$'s and $n$\hskip.45em$-1$'s  
such that the partial sums  are all greater than $-k$.  
The number of such sequences is  
$G_k(d-n,n)$.  

We see that the number of nodes at depth $d$ where there
are still remaining eggs is
\begin{eqnarray*}
M_k(d) &=& \sum_{n=0}^{n^*} G_k(d- n,n)  
=
\sum_{n=0}^{k-1} \binom{d}{n}  
+ \sum_{n=k}^{n^*}  \left[ \binom{d}{n}
-  \binom{d}{n-k}\right]\\  
&=&
\sum_{n=0}^{n^*} \binom{d}{n}  
- \sum_{n=k}^{n^*}  \binom{d}{n-k}  
=
\sum_{n=0}^{n^*} \binom{d}{n}  
- \sum_{n=0}^{n^*-k}  \binom{d}{n} \\  
&=&
\sum_{n = n^*-k+ 1}^{n^*} \binom{d}{n}  
\,.
\end{eqnarray*}
It is then immediate that $\lfloor (d+k-1)/2 \rfloor  - k+1 
= \lfloor (d-k+1)/2 \rfloor =\beta$. 
\end{proof}


\bibliographystyle{alphaurl}
\bibliography{eggs,fibquart}
\end{document}